\numberwithin{equation}{section}
\newtheorem{theorem}{Theorem}[section]
\newtheorem{lemma}{Lemma}[section]
\newtheorem{proposition}{Proposition}[section]
\newtheorem{remark}{Remark}[section]
\newtheorem{definition}{Definition}[section]
\journal{Sci. China Math.}
\begin{document}
\begin{frontmatter}

\title{Periodic solutions of a semilinear variable coefficient wave equation under asymptotic nonresonance conditions}
\author[ad1]{Hui Wei}
\ead{weihui01@163.com}
\author[ad2,ad3]{Shuguan Ji\corref{cor}}
\ead{jishuguan@hotmail.com}
\address[ad1]{Department of Mathematics, Luoyang Normal University, Luoyang 471934, P.R. China}
\address[ad2]{School of Mathematics and Statistics and Center for Mathematics and Interdisciplinary Sciences, Northeast Normal University, Changchun 130024, P.R. China}
\address[ad3]{School of Mathematics, Jilin University, Changchun 130012, P.R. China}
\cortext[cor]{Corresponding author.}

\begin{abstract}
We consider the periodic solutions of a semilinear variable coefficient wave equation arising from the forced vibrations of a nonhomogeneous string and the propagation of seismic waves in nonisotropic media.
The variable coefficient characterizes the inhomogeneity of media and its presence usually leads to the destruction of the compactness of the inverse of linear wave operator with periodic-Dirichlet boundary conditions on its range. In the pioneering work of Barbu and Pavel \cite{Barbu97a}, it gives the existence and regularity of periodic solution for Lipschitz, nonresonant and monotone nonlinearity under the assumption $\eta_u>0$ (see Sect. 2 for its definition) on the coefficient $u(x)$ and leaves the case $\eta_u=0$ as an open problem. In this paper,
by developing the invariant subspace method and using the complete reduction technique and Leray-Schauder theory, we obtain the existence of periodic solutions for such a problem when the nonlinear term satisfies the asymptotic nonresonance conditions. Our result not only does not need any requirements on the coefficient except for the natural  positivity assumption (i.e., $u(x)>0$), but also does not need the monotonicity assumption on the nonlinearity.  In particular, when the nonlinear term is an odd function and satisfies the global nonresonance conditions, there is only one (trivial) solution to this problem on the invariant subspace.

\end{abstract}

\begin{keyword}
Periodic solutions; Wave equation; Asymptotic nonresonance conditions
\end{keyword}

\end{frontmatter}

\section{Introduction}
In the present paper, our concern is the existence of the periodic solutions of the semilinear variable coefficient wave equation
\begin{equation}
\label{Eqn1-1}
u(x) y_{tt} -  (u(x)y_x)_x = f(t,x,y),  \ \  (t, x) \in \Omega := (0, T) \times (0, \pi),
\end{equation}
with the periodic conditions
\begin{equation}
\label{Eqn1-2}
y(0,x) = y(T,x),  \ \ y_t(0,x) = y_t(T,x),  \ \   x\in(0, \pi),
\end{equation}
and the Dirichlet boundary conditions
\begin{equation}
\label{Eqn1-3}
y(t,0) =  y(t,\pi) = 0, \ \  t\in (0, T),
\end{equation}
where the nonlinear term $f(t,x,y) \in C(\Omega\times \mathbb{R}, \mathbb{R})$ is periodic in time $t$ with a prescribed
period $T$ satisfying
\begin{equation}
\label{Eqn1-4}
T = 2\pi \frac{p}{q},
\end{equation}
with $p, q \in \mathbb{N}^+=\{1, 2, 3, \cdots\}$ and GCD $(p, q)=1$.

Equation (\ref{Eqn1-1}) is a mathematical model to describe the forced vibrations of a nonhomogeneous string and the propagation of seismic waves in nonisotropic media
(see e.g. \cite{Barbu97a,Ji06,W.19,WJ.19}). The variable coefficient $u=(\rho\mu)^{1/2}$, which is called the acoustic impedance function, arises naturally when the vibration equation $\rho(l)y_{tt}-(\mu(l)y_z)_z=0$ (with variable density $\rho$ and elasticity coefficient $\mu$) is normalized into $u(x)y_{tt}-(u(x)y_x)_x=0$ via the change of variable $x=\int_0^l(\frac{\rho(s)}{\mu(s)})^{1/2}{\rm d}s$.

It is obvious that the case of $u(x)\equiv 1$ corresponds to the classical (one-dimensional) wave equation. The problem of finding periodic solutions of such equations has been widely considered since the pioneer work of Rabinowitz \cite{Ra67}.
For example, see \cite{Berti03,B.81,B.78a,BW94,Craig93,Ra78} for one dimensional case and \cite{Ben95,Berti10,Ben93,Chen16,W.18} for higher dimensional case. Most of these results are based on the  spectral properties of wave operator. In particular, for the  Dirichlet boundary value problems, when the time period $T$ is a rational multiple of string length $\pi$,  then $0$ is the eigenvalue with infinite multiplicity of the wave operator $\partial_{tt}-\partial_{xx}$,
and the remaining eigenvalues  are well separated and accumulate to infinity. This good separation properties implies that the inverse of $\partial_{tt}-\partial_{xx}$  is  compact on its range. 
So the compactness method can be applied to estimate the component in range space of a periodic solution.
To estimate the component in kernel space, in general it needs to require that the nonlinearity is monotone. Thus, the nonlinear problems can be solved by using compactness and monotonicity methods (see e.g. \cite{Chang82,Ding98,Fokam17,Tanaka88,Tanaka06}).
On the other hand, when $T$ is an irrational multiple of $\pi$, then $0$ is an accumulation point of the spectrum,  thus it involves the small divisor problem and the variational method does not work, and so a quite different approach, the infinite dimensional KAM theory, is proposed to deal with such problems (see \cite{Ku87, W.90}).

For the variable coefficient wave equations, Barbu and Pavel \cite{Barbu97a} established firstly the existence and regularity of periodic solution for Lipschitz, nonresonant and monotone nonlinearity under the assumption $\eta_{u}(x)= \frac{1}{2} \frac{u''}{u}-\frac{1}{4}\big(\frac{u'}{u}\big)^2>0$ on the coefficient $u(x)$ and left the case $\eta_u=0$ as an open problem. Subsequently, the research on periodic solutions of variable coefficient wave equations has gained more attention (see e.g. \cite{Chen15,Ji08,Ji07,Ji16,Ma18,Ru16,Ru17}).
It is worth mentioning that, in \cite{Ji11}, Ji and Li  obtained the existence of periodic solution of the nonlinear wave equation with sublinear nonlinearity and constant or variable coefficients which may not satisfy the assumption $\eta_{u}(x)>0$. Recently, Ji \cite{Ji.18} found that, for some types of boundary value conditions but not including Dirichlet boundary value condition, the inverse of variable coefficient wave operator is still compact on its range. Thus, without imposing the the assumption $\eta_u(x) >0$, he got the existence of periodic solutions for the monotone and bounded nonlinearity.

In this paper, we study the periodic solutions of the variable coefficient wave equation (\ref{Eqn1-1}) with the periodic conditions (\ref{Eqn1-2}) and the Dirichlet boundary value conditions (\ref{Eqn1-3}). This is one type of the most basic boundary conditions, but was left unsolved in \cite{Ji.18} due to the loss of compactness for the inverse of variable coefficient wave operator. Here, by a further  research on the spectrum of the variable coefficient wave operator, we construct an invariant subspace in which the spectrum has good separation properties. Then, under some symmetry condition on the nonlinear term, the problem \eqref{Eqn1-1}--(\ref{Eqn1-3}) can be reduced onto this invariant subspace. Finally, by employing Leray-Schauder type method, we obtain the existence of  periodic solution to the problem \eqref{Eqn1-1}--\eqref{Eqn1-3} when the nonlinear term satisfies the asymptotic nonresonance conditions in a nonuniform manner with respect to $t$ and $x$. The result we obtained does not require any restrictive conditions on the coefficient except for the natural  positivity assumption $u(x)>0$, and it does not need the monotonicity assumption on the nonlinearity. In particular, when the nonlinear term is odd with respect to $y$ and satisfies the global nonresonance conditions (see \eqref{eq3.9}), the problem \eqref{Eqn1-1}--\eqref{Eqn1-3} has only one (trivial) solution on the invariant subspace.

Denote $$\hat{f}(t,x,y)={f(t,x,y)}/{u(x)}.$$
 Throughout this paper we make the following assumptions:

(H1) $u(x) \in C^2[0, \pi]$,  $u(x)>0$ for all $x\in [0,\pi]$.

(H2) For every constant $R>0$, there exists $h_R(t,x) \in L^2(\Omega)$ such that
\begin{equation}\label{eq1.4}
|\hat{f}(t,x,y)| \leq h_R(t,x), \ \ {\rm for} \ |y| \leq R.
\end{equation}

An outline of this paper is organized as follows.  In Sect. \ref{sec:2}, we introduce the definition of weak periodic solution and characterize some properties of the spectrum of the variable coefficient wave operator. Then, two invariant subspaces of $L^2(\Omega)$ are constructed and
some fundamental lemmas are given in  Sect. \ref{sec:3}. Finally, we establish and prove the main results (Theorems 4.1 and 4.2) in Sect. \ref{sec:4}.

\section{Preliminaries}
\setcounter{equation}{0}
\label{sec:2}

Denote
\begin{eqnarray*}
\Psi = \{\psi \in C^\infty(\Omega) \mid \psi(t,0) = \psi(t,\pi) = 0,  \psi(0,x) = \psi(T,x), \psi_t(0,x) = \psi_t(T,x) \},
\end{eqnarray*}
and define
$$L^r(\Omega) = \Big\{ y\mid \|y\|_{L^r(\Omega)} = \Big(\int_\Omega u(x) |y(t,x)|^r  \textrm{d}t \textrm{d}x \Big)^{\frac{1}{r}}<\infty\Big\}.$$
In particular, for $r=2$, $L^2(\Omega)$ is a Hilbert space with the inner product
$$\langle y, z \rangle = \int_\Omega u(x) y(t,x)  \overline{z(t,x)} \textrm{d}t \textrm{d}x, \ \forall y, z \in L^2(\Omega).$$
For the sake of convenience,  in what follows we use $\|\cdot\|$ to denote $\|\cdot\|_{L^2(\Omega)}$.

\begin{definition}
A function $y \in L^2(\Omega)$ is called a weak solution of problem \eqref{Eqn1-1}--\eqref{Eqn1-3} if it satisfies
$$\int_\Omega y(u(x)\psi_{tt} - (u(x)\psi_x)_x){\rm d}t \textrm{\rm d}x - \int_\Omega f(t,x,y) \psi \textrm{\rm d}t \textrm{\rm d}x = 0, \ \ \forall \psi \in \Psi. $$
\end{definition}


For the study of the periodic solution of problem  \eqref{Eqn1-1}--\eqref{Eqn1-3}, we need to analyze the spectrum of variable coefficient wave operator, which is closely related to the following Sturm-Liouville problem
\begin{eqnarray*}
&&(u(x)\varphi'_n(x))' = -\lambda_n^2 u(x) \varphi_n(x),  \ \ n\in \mathbb{N}^+,\\
&&\varphi_n(0) = \varphi_n(\pi) =0.
\end{eqnarray*}
In virtue of Sect. 4 in \cite{Fulton94}, it is known that
\begin{equation*}
\lambda_n^2= \left( n + \frac{\kappa}{2n\pi}  + O\left(\frac{1}{n^3}\right)\right)^2,
\end{equation*}
with
\begin{equation*}
\kappa = \int_0^\pi \eta_{u}(x) {\rm d}x, \ \ \eta_{u}(x) = \frac{1}{2} \frac{u''}{u}-\frac{1}{4} \left(\frac{u'}{u}\right)^2.
\end{equation*}
A direct calculation shows that
\begin{equation*}
\lambda_n^2=n^2 +\frac{\kappa}{\pi} + O(\frac{1}{n^2}), \ \ n\in \mathbb{N}^+.
\end{equation*}

Assume that $\varphi_n(x)$ is the corresponding normalized eigenfunction which satisfies $\int_0^\pi u(x)|\varphi_n(x)|^2{\rm d}x=1$. Then the functions system
$$\left\{T_m\varphi_n\cos \frac{2\pi}{T}mt, T_m\varphi_n\sin \frac{2\pi}{T}mt\right\}, \ \ m \in\mathbb{N}=\mathbb{N}^+\cup \{0\}, n \in \mathbb{N}^+, $$
with $T_0= 1/\sqrt{T}$ and $T_m = \sqrt{2/T}$ for $m \in \mathbb{N}^+$, is completely orthonormal in $L^2(\Omega)$.
Thus, in view of $T=2\pi\frac{p}{q}$ in (\ref{Eqn1-4}), a function $y \in L^2(\Omega)$ can be written as the Fourier series
\begin{equation}
\label{Eqn2-1}
y=\sum_{n=1}^{\infty}\sum_{m=0}^{\infty} T_m \varphi_n(x) \left( a_{mn} \cos \frac{q}{p}mt + b_{mn} \sin \frac{q}{p}mt\right),
\end{equation}
with
\begin{eqnarray*}
&&a_{mn}=T_m\int_\Omega u(x)\varphi_n(x) y(t,x)  \cos \frac{q}{p}mt {\rm d}t{\rm d}x,\\
&&b_{mn}=T_m\int_\Omega u(x)\varphi_n(x) y(t,x)  \sin \frac{q}{p}mt {\rm d}t{\rm d}x,
\end{eqnarray*}
for $ m \in\mathbb{N}, n \in \mathbb{N}^+$.

Set
$$ \mathcal{D}(L)=\left\{y\in L^2(\Omega)\Big| \sum_{n=1}^{\infty}\sum_{m=0}^{\infty} \Big( \lambda_n^2 - \big(\frac{q}{p}m\big)^2 \Big)^2(a_{mn}^2 + b_{mn}^2) < \infty\right\}.$$
Then, for any $y\in \mathcal{D}(L)$ with the Fourier expansion (\ref{Eqn2-1}), we define the operator $L: \mathcal{D}(L)\rightarrow L^2(\Omega)$ as
\begin{eqnarray*}
 y\mapsto Ly=\sum_{n=1}^{\infty}\sum_{m=0}^{\infty} \Big( \lambda_n^2 - \big(\frac{q}{p}m\big)^2 \Big)T_m \varphi_n(x) \left( a_{mn} \cos \frac{q}{p}mt + b_{mn} \sin \frac{q}{p}mt\right).
\end{eqnarray*}
It is obvious that
$$\mu_{mn}=\lambda_n^2 - \big({qm}/{p}\big)^2$$
 are the eigenvalues of $L$,  and its set is denoted by
$$\Lambda(L)=\{\mu_{mn} \mid m \in\mathbb{N}, \ n \in \mathbb{N}^+\}.$$

\begin{proposition}\label{pp2.1}
Assume $p$ is an even number. Then, for odd $m$, the eigenvalue $\mu_{mn}$ are isolated and has finite multiplicity.
\end{proposition}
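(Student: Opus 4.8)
The plan is to reduce both conclusions to a single finiteness statement: every bounded interval $[a,b]\subset\mathbb{R}$ contains only finitely many eigenvalues $\mu_{mn}$ with $m$ odd. Once this is established, isolation follows because a set with no finite accumulation point is discrete, and finite multiplicity follows because only finitely many index pairs $(m,n)$ with $m$ odd can take any single value. First I would record the asymptotics $\lambda_n^2 = n^2 + \frac{\kappa}{\pi} + O\big(\frac{1}{n^2}\big)$ proved above, so that
$$\mu_{mn} = n^2 - \frac{q^2 m^2}{p^2} + \frac{\kappa}{\pi} + O\Big(\frac{1}{n^2}\Big) = \frac{(np-qm)(np+qm)}{p^2} + \frac{\kappa}{\pi} + O\Big(\frac{1}{n^2}\Big).$$
The term $\frac{\kappa}{\pi} + O\big(\frac{1}{n^2}\big)$ is uniformly bounded in $m$ and $n$, so the size of $\mu_{mn}$ is controlled by the rational number $\frac{(np-qm)(np+qm)}{p^2}$.

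The decisive ingredient is a parity observation. Since $\gcd(p,q)=1$ and $p$ is even, the integer $q$ must be odd; hence for odd $m$ the product $qm$ is odd while $np$ is even, so that $j := np - qm$ is an odd integer and in particular nonzero, giving $|np-qm|\geq 1$. Consequently
$$\Big| n^2 - \frac{q^2 m^2}{p^2} \Big| = \frac{|np-qm|\,(np+qm)}{p^2} \geq \frac{np+qm}{p^2}.$$

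The heart of the argument, and the step demanding the most care, is to convert this lower bound into finiteness. Suppose $\mu_{mn}\in[a,b]$ with $m$ odd. Choosing $n_0$ so that $\big|\frac{\kappa}{\pi}+O(\frac{1}{n^2})\big|\leq C$ for all $n\geq n_0$, any such pair with $n\geq n_0$ satisfies $\big|n^2-\frac{q^2m^2}{p^2}\big|\leq \max(|a|,|b|)+C =: M$. Combined with the previous display this forces $np+qm\leq Mp^2$, whence both $n$ and $m$ are bounded; adjoining the finitely many pairs with $n<n_0$ shows that only finitely many index pairs $(m,n)$ with $m$ odd can land in $[a,b]$. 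The subtlety to guard against is precisely that a bounded eigenvalue might arise from large indices lying on a resonant line $np-qm=j$; but because $|j|\geq 1$ forces $\big|n^2-\frac{q^2m^2}{p^2}\big|=\frac{|j|(np+qm)}{p^2}\to\infty$ along any such line (here $np+qm\to\infty$), this cannot happen, and the uniformly vanishing error $O\big(\frac{1}{n^2}\big)$ cannot rescue it.

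Finally I would conclude. The finiteness of $\{\mu_{mn}\mid m\text{ odd}\}\cap[a,b]$ for every bounded interval means the odd-$m$ eigenvalues have no finite accumulation point, so each such $\mu_{mn}$ is isolated, and since only finitely many index pairs can share a common value, each has finite multiplicity. The main obstacle is organizing the estimate so that the growth of the spectral gap along the resonant lines $np-qm=j$ is made explicit; this is exactly what the parity bound $|np-qm|\geq 1$, itself a consequence of $p$ being even, delivers.
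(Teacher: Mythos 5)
Your proposal is correct and takes essentially the same route as the paper: the parity observation that $p$ even and $\gcd(p,q)=1$ force $q$ odd, hence $|np-mq|\geq 1$ for odd $m$, combined with the factorization $\mu_{mn}=\frac{(np-mq)(np+mq)}{p^2}+\frac{\kappa}{\pi}+O\bigl(\frac{1}{n^2}\bigr)\rightarrow\infty$, is exactly the paper's argument. You simply spell out the step the paper leaves implicit --- that this divergence gives only finitely many odd-$m$ eigenvalues in any bounded interval, whence isolation and finite multiplicity --- with the one cosmetic caveat that for the fixed finitely many values $n<n_0$ the bound on $m$ should be stated as coming from $\mu_{mn}\rightarrow-\infty$ as $m\rightarrow\infty$, rather than from there being ``finitely many pairs'' with $n<n_0$.
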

\begin{proof}
If $p$ is even, then $q$ is odd. Thus, for odd $m$, we have $np \neq mq$. Therefore, it is easy to know
\begin{equation}\label{eq2-2}
\mu_{mn}=\lambda_n^2 - \big(\frac{q}{p}m\big)^2=\frac{(np-mq)(np+mq)}{p^2} +\frac{\kappa}{\pi} +O(\frac{1}{n^2})\rightarrow \infty,
\end{equation}
as $m, n \rightarrow \infty$. The proof is completed.
\end{proof}

\section{Invariant subspaces of $L^{2}(\Omega)$ and some fundamental lemmas}
\label{sec:3}

In this section, we assume $p$ is an even number. By Proposition \ref{pp2.1}, we may construct an invariant subspace of $L^2(\Omega)$ in which  the spectrum of the restricted operator has good separation properties.
Then we restrict the problem \eqref{Eqn1-1}--\eqref{Eqn1-3} on this invariant subspace and give several fundamental results.
\begin{definition}
\label{Def3-1}
Let $\mathcal{M}_1$ and $\mathcal {M}_2$  be the closed subspaces of $L^2(\Omega)$ such that $L^2(\Omega)=\mathcal{M}_1\oplus\mathcal{M}_2$
and $\mathcal{M}_1 \perp \mathcal{M}_2$. The operator $L$ is called to be completely reduced by $\mathcal{M}_1$ and $\mathcal{M}_2$ if it satisfies
$$P_i \big(\mathcal{D}(L)\big) \subset \mathcal{D}(L), \ \ LP_i(y)=P_i L(y), \ \ i=1,2,$$
for any $y \in \mathcal{D}(L)$, where $P_i$ is the projection onto $\mathcal{M}_i$ for $i=1, 2$.
\end{definition}
The restriction of $L$ on $\mathcal{M}_i$ is denoted by $L_i=L|_{\mathcal{M}_i\cap \mathcal{D}(L)}$ for $i=1,2$. Then $L_i$ inherits the properties of $L$ and $\Lambda(L_i)\subset \Lambda(L)$ for $i=1, 2$.

Define
\begin{equation*}
\mathcal{M}_o = \Big\{ y\in L^{2}(\Omega)\mid y(t,x)= -y(t+T/2, x)\Big\}
\end{equation*}
and
\begin{equation*}
\mathcal{M}_e= \Big\{ y\in L^{2}(\Omega)\mid y(t,x)=y(t+T/2, x)\Big\}.
\end{equation*}
It is not difficult to verify that
\begin{eqnarray*}
&&\mathcal{M}_o = {\rm {Span}}\Big\{ \varphi_n(x)\cos \frac{q}{p}mt,  \varphi_n(x)\sin \frac{q}{p}mt \mid  n \in \mathbb{N}^+, m \ {\rm is \ odd}\Big\},\\
&&\mathcal{M}_e = {\rm {Span}}\Big\{ \varphi_n(x)\cos \frac{q}{p}mt,  \varphi_n(x)\sin \frac{q}{p}mt \mid  n \in \mathbb{N}^+, m \ {\rm is \ even}\Big\},
\end{eqnarray*}
and $L$ is completely reduced by $\mathcal{M}_o$ and $\mathcal{M}_e$.

For simplicity,  denote
$$L_o=L|_{\mathcal{M}_o \cap \mathcal{D}(L)}\ \  {\rm and} \ \ L_e=L|_{\mathcal{M}_e \cap \mathcal{D}(L)}. $$
Then, by Proposition \ref{pp2.1}, it is known that $\dim (\ker L_o) < \infty$ and  $\Lambda(L_o)$ is an unbounded discrete set. Moreover, by a standard way (see e.g. \cite{Barbu97a, Ji.18}), it is easy to verify $\mathcal{M}_o = \ker L_o\oplus \mathcal{R}(L_o)$ and $L_o^{-1}$ is compact on $\mathcal{R}(L_o)$, where $\mathcal{R}(L_o)$ denotes the range of $L_o$.

\begin{lemma}\label{lem:3.1}
Let $\underline{\lambda}, \bar{\lambda} \in \Lambda(L_o)$ be two consecutive eigenvalues. Assume $\alpha$, $\beta \in \mathcal{M}_e \cap L^\infty(\Omega)$ satisfying
\begin{equation}\label{eq3-f}
\underline{\lambda} \leq \alpha(t,x) \leq \beta(t,x)\leq \bar{\lambda}, \ \ {for \ a.e.} \ (t,x)\in\Omega.
\end{equation}
Moreover, assume
\begin{equation}\label{eq3-g}
\int_\Omega(\alpha-\underline{\lambda})v^2{\rm d}t {\rm d}x>0, \ \ \forall v\in\ker(L_o-\underline{\lambda} I) \backslash \{0\},
\end{equation}
and
\begin{equation}\label{eq3-h}
\int_\Omega(\bar{\lambda}-\beta)w^2{\rm d}t {\rm d}x>0, \ \ \forall w\in\ker(L_o-\bar{\lambda} I) \backslash \{0\}.
\end{equation}
Then, there exist $\varepsilon >0$ and $\delta >0$ such that for any  $\gamma \in \mathcal{M}_e \cap L^\infty(\Omega)$ satisfying
\begin{equation}\label{eq3-a}
\alpha(t,x) -\varepsilon \leq \gamma (t,x)\leq \beta(t,x) +\varepsilon, \ \ { for \ a.e. \ } (t,x)\in \Omega,
\end{equation}
we have $\|L_o y - \gamma y\| \geq \delta \|y\|$ for all $y \in \mathcal{D}(L_o)\cap \mathcal{M}_o$.
\end{lemma}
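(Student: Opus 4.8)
The plan is to argue by contradiction, turning the claimed uniform lower bound into a statement about a limiting eigenvalue problem. First I would record the structural fact that keeps everything inside $\mathcal{M}_o$: if $\gamma\in\mathcal{M}_e$ and $y\in\mathcal{M}_o$ then $(\gamma y)(t+T/2,x)=\gamma(t,x)\bigl(-y(t,x)\bigr)=-(\gamma y)(t,x)$, so $\gamma y\in\mathcal{M}_o$ and hence $L_o y-\gamma y\in\mathcal{M}_o$. Now suppose the conclusion is false. Taking $\varepsilon=\delta=1/k$, for each $k$ there are $\gamma_k\in\mathcal{M}_e\cap L^\infty(\Omega)$ with $\alpha-1/k\le\gamma_k\le\beta+1/k$ a.e. and $y_k\in\mathcal{D}(L_o)\cap\mathcal{M}_o$ with $\|y_k\|=1$ and $\|L_o y_k-\gamma_k y_k\|<1/k$. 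Writing $z_k:=L_o y_k-\gamma_k y_k\to 0$, the finiteness of $\|\alpha\|_\infty,\|\beta\|_\infty$ gives $\|\gamma_k\|_{L^\infty}\le C$, so $\{\gamma_k y_k+z_k\}=\{L_o y_k\}$ is bounded in $L^2(\Omega)$.

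Next I would establish precompactness of $\{y_k\}$. Splitting $y_k=y_k^0+\tilde y_k$ with $y_k^0\in\ker L_o$ and $\tilde y_k\in\mathcal{R}(L_o)$, the kernel part lies in the finite-dimensional space $\ker L_o$ and is precompact, while $\tilde y_k=L_o^{-1}(L_o y_k)$ with $L_o^{-1}$ compact on $\mathcal{R}(L_o)$ and $\{L_o y_k\}$ bounded, so $\{\tilde y_k\}$ is precompact. Passing to a subsequence, $y_k\to y_0$ strongly with $\|y_0\|=1$, and $\gamma_k\overset{*}{\rightharpoonup}\gamma_0$ in $L^\infty(\Omega)$ with $\alpha\le\gamma_0\le\beta$ a.e. (and $\gamma_0\in\mathcal{M}_e$). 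Then $\gamma_k y_k\rightharpoonup\gamma_0 y_0$ weakly in $L^2(\Omega)$: strong convergence of $y_k$ handles $\gamma_k(y_k-y_0)$, and weak-$*$ convergence handles $(\gamma_k-\gamma_0)y_0$. Hence $L_o y_k=\gamma_k y_k+z_k\rightharpoonup\gamma_0 y_0$, and since the graph of the closed (self-adjoint) operator $L_o$ is weakly closed, $y_0\in\mathcal{D}(L_o)$ and $L_o y_0=\gamma_0 y_0$, with $\underline{\lambda}\le\alpha\le\gamma_0\le\beta\le\bar{\lambda}$.

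Finally I would exploit the spectral gap. Let $P_-,P_+$ be the orthogonal projections of $\mathcal{M}_o$ onto the closed spans of the eigenfunctions of $L_o$ with eigenvalues $\le\underline{\lambda}$ and $\ge\bar{\lambda}$; since $\underline{\lambda},\bar{\lambda}$ are consecutive there is no spectrum strictly between them and $\mathcal{M}_o=P_-\mathcal{M}_o\oplus P_+\mathcal{M}_o$. Set $v:=P_-y_0$, $w:=P_+y_0$, and test $L_o y_0=\gamma_0 y_0$ against $w-v$. Using self-adjointness, invariance of the eigenspaces, and $(w+v)(w-v)=w^2-v^2$, one obtains
\begin{equation*}
\big[\langle L_o w,w\rangle-\bar{\lambda}\|w\|^2\big]+\int_\Omega u(\bar{\lambda}-\gamma_0)w^2+\big[\underline{\lambda}\|v\|^2-\langle L_o v,v\rangle\big]+\int_\Omega u(\gamma_0-\underline{\lambda})v^2=0.
\end{equation*}
All four terms are nonnegative (the bracketed ones by the spectral bounds on $P_\pm\mathcal{M}_o$, the integrals because $\underline{\lambda}\le\gamma_0\le\bar{\lambda}$ and $u>0$), so each vanishes. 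Vanishing of the bracketed terms forces $w\in\ker(L_o-\bar{\lambda}I)$ and $v\in\ker(L_o-\underline{\lambda}I)$; vanishing of the integrals together with $\gamma_0\ge\alpha$ and $\gamma_0\le\beta$ gives $\int_\Omega u(\alpha-\underline{\lambda})v^2=\int_\Omega u(\bar{\lambda}-\beta)w^2=0$, and since $u>0$ the integrands vanish a.e., so the unweighted integrals in \eqref{eq3-g} and \eqref{eq3-h} vanish as well. By \eqref{eq3-g} and \eqref{eq3-h} this is impossible unless $v=w=0$, i.e. $y_0=0$, contradicting $\|y_0\|=1$.

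I expect the main obstacle to be the compactness and limit passage: one must combine the finite-dimensionality of $\ker L_o$ with the compactness of $L_o^{-1}$ on $\mathcal{R}(L_o)$ to upgrade weak to strong convergence of $\{y_k\}$, and then pass to the limit in $\gamma_k y_k$ where only weak-$*$ convergence of $\gamma_k$ is available, justifying $L_o y_0=\gamma_0 y_0$ through weak closedness of the graph. The subsequent quadratic-form step is essentially algebraic, the only delicate point being that the positivity $u>0$ lets the weighted identities collapse onto the unweighted hypotheses \eqref{eq3-g}–\eqref{eq3-h}.
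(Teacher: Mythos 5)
Your proof is correct, and while it shares the paper's overall skeleton --- contradiction with a normalized sequence $\|y_k\|=1$, splitting $\mathcal{M}_o$ at the spectral gap, testing against the difference of the two components, and invoking \eqref{eq3-g}--\eqref{eq3-h} to kill the $\underline{\lambda}$- and $\bar{\lambda}$-eigenspace parts --- the middle of your argument takes a genuinely different route. The paper never uses compactness of $L_o^{-1}$ inside this proof: it tests the approximate identity $\xi_j=L_oy_j-\gamma_jy_j$ against $y_{2j}-y_{1j}$ along the sequence, and the resulting gap inequality $(\bar{\lambda}^*-\bar{\lambda})\|y_{2j}^*\|^2+(\underline{\lambda}-\underline{\lambda}^*)\|y_{1j}^*\|^2\leq \sqrt{2}/j+1/j$ itself forces the components away from the extreme eigenspaces to vanish strongly, after which only the finite-dimensional pieces (Proposition \ref{pp2.1}) survive; the limiting information is then read off term by term, without ever forming a limit equation. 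You instead obtain strong precompactness of the whole sequence $\{y_k\}$ from $\mathcal{M}_o=\ker L_o\oplus\mathcal{R}(L_o)$ with $\dim\ker L_o<\infty$ and $L_o^{-1}$ compact on $\mathcal{R}(L_o)$, pass to the exact eigen-equation $L_oy_0=\gamma_0y_0$ via weak closedness of the graph, and run the quadratic-form identity once on the limit. Two of your choices are in fact more careful than the paper's own write-up: the paper simply asserts $\gamma_j\to\gamma_0$ a.e., an extraction that is not available for a general $L^\infty$-bounded band, whereas your weak-$*$ subsequence (which, combined with strong convergence of $y_k$, suffices to pass to the limit in $\gamma_ky_k$) is the rigorous substitute, in the spirit of the Mawhin--Ward treatment of the constant-coefficient case; and you explicitly reconcile the $u$-weighted inner products with the unweighted hypotheses \eqref{eq3-g}--\eqref{eq3-h} via $u>0$, a point the paper glosses over. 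What each approach buys: the paper's is self-contained, needing only discreteness and finite multiplicity of $\Lambda(L_o)$ rather than the compact-resolvent fact; yours is structurally cleaner and leans on compactness properties the paper has in any case already recorded for $L_o$ and uses later for the degree-theoretic arguments.
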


\begin{remark}
Here the conditions $\alpha, \beta, \gamma\in \mathcal{M}_e$ are imposed to make sure  $\alpha y, \beta y, \gamma y\, \in \mathcal{M}_o$ for any $y \in \mathcal{M}_o$.
\end{remark}

\begin{proof}
 We prove the result by contradiction. In fact, if the result is false, we can find a sequence $\{y_j\} \subset \mathcal{D}(L_o)\cap \mathcal{M}_o$ with $\|y_j\|=1$  and $\{\gamma_j\} \subset \mathcal{M}_e \cap L^\infty(\Omega)$ such that
\begin{eqnarray}\label{eq-3.1}
\alpha(t,x) -\frac{1 }{j}\leq \gamma_j (t,x)\leq \beta(t,x) +\frac{1 }{j}, \ \ j\in  \mathbb{N}^+,
\end{eqnarray}
for a.e. $(t,x)\in \Omega$, and
\begin{equation*}
\gamma_j (t,x)\rightarrow \gamma_0 (t,x), \ \  {\rm as} \ \ j\rightarrow \infty,
\end{equation*}
for some $\gamma_0 \in \mathcal{M}_e \cap L^\infty(\Omega)$ and a.e. $(t,x)\in \Omega$,  and
\begin{equation}\label{eq3-z}
\|L_{o} y_j - \gamma_j y_j\| \leq \frac{1}{j}.
\end{equation}
Denote
\begin{equation}\label{eq-3.2}
\xi_j = L_{o} y_j - \gamma_j y_j.
\end{equation}
Then, we have $\|\xi_j\|\leq \frac{1}{j}$ for $j\in  \mathbb{N}^+$ and
\begin{equation}\label{eq-3.3}
\alpha(t,x) \leq \gamma_0 (t,x)\leq \beta(t,x),
\end{equation}
for a.e. $(t,x)\in \Omega$. 

Define
$$\mathcal{H}_1 = \Big\{ y\in \mathcal{M}_o\mid   \mu_{mn}\leq \underline{\lambda},\, m \in\mathbb{N}, \ n \in \mathbb{N}^+ \Big\}$$
and
$$\mathcal{H}_2 = \Big\{ y\in \mathcal{M}_o \mid   \mu_{mn}\geq \bar{\lambda}, \, m \in\mathbb{N}, \ n \in \mathbb{N}^+ \Big\}.$$
Since $\underline{\lambda}, \bar{\lambda}$ are two consecutive eigenvalues, we have
\begin{equation*}
\mathcal{M}_o = \mathcal{H}_1 \oplus \mathcal{H}_2.
\end{equation*}

Split $y_j=y_{1j} +y_{2j}$ with $y_{1j} \in \mathcal{D}(L_o)\cap \mathcal{H}_1$ and $y_{2j} \in \mathcal{D}(L_o)\cap \mathcal{H}_2$.
Let us  multiply both sides of equation \eqref{eq-3.2} by $y_{2j}-y_{1j}$, and then, by taking the inner product of $L^2(\Omega)$, it yields

\begin{equation*}
\langle L_o y_j - \gamma_j y_j,y_{2j}-y_{1j} \rangle =\langle \xi_j, y_{2j}-y_{1j}\rangle,
\end{equation*}
i.e.,
\begin{equation*}\label{eq2.6}
\langle L_o y_{2j} - \gamma_j y_{2j}, y_{2j}\rangle - \langle L_o y_{1j}- \gamma_j y_{1j}, y_{1j} \rangle =\langle \xi_j, y_{2j}-y_{1j}\rangle.
\end{equation*}

Let $\underline{\lambda}^* < \underline{\lambda}$ and $\bar{\lambda} <\bar{\lambda}^*$ be two pairs of consecutive eigenvalues, then  $\underline{\lambda}^* < \underline{\lambda}<\bar{\lambda} <\bar{\lambda}^*$.
Decompose
\begin{equation*}
\mathcal{H}_i = \mathcal{H}_i^* \oplus \mathcal{H}_i^0, \ \ i=1,2,
\end{equation*}
where $\mathcal{H}_1^*$ (resp. $\mathcal{H}_2^*$) and $\mathcal{H}_1^0$ (resp. $\mathcal{H}_2^0$) are spanned by the eigenfunctions  corresponding to the eigenvalues which satisfy $\mu_{mn} \leq \underline{\lambda}^*$ (resp. $\mu_{mn} \geq \bar{\lambda}^*$) and $\mu_{mn} = \underline{\lambda}$ (resp. $\mu_{mn} = \bar{\lambda}$), respectively. Consequently, we can rewrite
\begin{equation*}
y_{ij} = y_{ij}^* +y_{ij}^0, \ \ {\rm for} \ \ y_{ij}^*\in \mathcal{H}_i^*, \ y_{ij}^0 \in \mathcal{H}_i^0, \   i=1,2.
\end{equation*}

According to  $\|y_j\|^2=\|y_{1j}\|^2+\|y_{2j}\|^2=1$, we have
\begin{equation*}
\|y_{1j}- y_{2j}\|\leq \|y_{1j}\|+\|y_{2j}\| \leq 2 \Big(\frac{\|y_{1j}\|^2+\|y_{2j}\|^2}{2}\Big)^{\frac{1}{2}} \leq \sqrt{2}.
\end{equation*}
Thus, we have
\begin{eqnarray}
\frac{\sqrt{2}}{j}&\geq &\langle \xi_j, y_{2j}-y_{1j}\rangle \nonumber\\
&=&\langle L_o y_{2j} - \gamma_j y_{2j}, y_{2j}\rangle - \langle L_o y_{1j}- \gamma_j y_{1j}, y_{1j} \rangle\nonumber\\
&=&\langle L_o (y_{2j}^* + y_{2j}^0) - \gamma_j (y_{2j}^* + y_{2j}^0), (y_{2j}^* + y_{2j}^0)\rangle\nonumber\\
&&-\langle L_o (y_{1j}^* + y_{1j}^0)
- \gamma_j (y_{1j}^* + y_{1j}^0), (y_{1j}^* + y_{1j}^0) \rangle \nonumber\\
&=& I_1 - I_2,\label{eq3-d}
\end{eqnarray}
with
\begin{eqnarray}
I_1 &=&  \langle L_o y_{2j}^*  - \gamma_j y_{2j}^* , y_{2j}^*\rangle + \langle (\bar{\lambda}  - \gamma_j) y_{2j}^0, y_{2j}^0\rangle, \label{eq-3.6}\\
I_2&=&  \langle L_o y_{1j}^*  - \gamma_j y_{1j}^* , y_{1j}^*\rangle + \langle (\underline{\lambda}  - \gamma_j)  y_{1j}^0, y_{1j}^0\rangle. \label{eq-3.7}
\end{eqnarray}
Here we make use of the orthogonality of $\mathcal{H}_i^*$ and $\mathcal{H}_i^0$.

By \eqref{eq3-f} and \eqref{eq-3.1}, we have
\begin{eqnarray}
I_1 &\geq& \langle L_o y_{2j}^*  - (\bar{\lambda} + \frac{1}{j}) y_{2j}^* , y_{2j}^*\rangle - \frac{1}{j}\langle y_{2j}^0, y_{2j}^0\rangle,\label{eq3-b}\\
I_2 &\leq& \langle L_o y_{1j}^*  - (\underline{\lambda} - \frac{1}{j})y_{1j}^* , y_{1j}^*\rangle + \frac{1}{j}\langle  y_{1j}^0, y_{1j}^0\rangle.\label{eq3-c}
\end{eqnarray}
Therefore, from \eqref{eq3-d}, \eqref{eq3-b} and \eqref{eq3-c}, we have
\begin{eqnarray*}
\frac{\sqrt{2}}{j}&\geq & \langle L_o y_{2j}^*  - \bar{\lambda} y_{2j}^* , y_{2j}^*\rangle  -\langle L_o y_{1j}^*  - \underline{\lambda} y_{1j}^* , y_{1j}^*\rangle - \frac{1}{j}\nonumber\\
&\geq & (\bar{\lambda}^*-\bar{\lambda})\|y_{2j}^*\|^2 + (\underline{\lambda} -\underline{\lambda}^*)\|y_{1j}^*\|^2 - \frac{1}{j}.
\end{eqnarray*}
Thus, it follows
\begin{equation}\label{eq-3.4}
y_{1j}^* \rightarrow 0, \ \ y_{2j}^* \rightarrow 0, \ \ {\rm as} \ j\rightarrow \infty.
\end{equation}

Since $\{y_j\}$ is a bounded sequence and $\dim \mathcal{H}_i^0 <\infty$ ($i=1,2$) provided by Proposition \ref{pp2.1}, then there exist $y_1^0 \in \mathcal{H}_1^0$ and $y_2^0 \in \mathcal{H}_2^0$ such that
\begin{equation*}
y_{1j}^0 \rightarrow y_1^0, \ \ y_{2j}^0 \rightarrow y_2^0, \ \ {\rm as} \ j\rightarrow \infty.
\end{equation*}
Thus,  in view of \eqref{eq-3.4} and $y_{ij} = y_{ij}^* +y_{ij}^0$  ($i=1,2$), we have
\begin{equation*}
y_{1j} \rightarrow y_1^0, \ \ y_{2j} \rightarrow y_2^0, \ \ {\rm as} \ j\rightarrow \infty,
\end{equation*}
and
\begin{equation*}
y_j \rightarrow y_{\infty}:= y_1^0 + y_2^0, \ \ {\rm as} \ j\rightarrow \infty,
\end{equation*}
with $\|y_{\infty}\|^2= \|y_1^0\|^2 + \|y_2^0\|^2=1$ provided by $\|y_j\|=1$.

According to \eqref{eq3-z}, we have $\|L_o y_{ij}^*\|$ ($i=1,2$) is bounded.
Thus, by \eqref{eq-3.4} and $\gamma_j (t,x)\rightarrow \gamma_0 (t,x)$, we have
\begin{equation*} \label{eq-3.8}
\langle L_o y_{2j}^*  - \gamma_j y_{2j}^* , y_{2j}^*\rangle \rightarrow 0 \ \ {\rm and} \ \ \langle L_o y_{1j}^*  - \gamma_j y_{1j}^* , y_{1j}^*\rangle \rightarrow 0, \ \ {\rm as} \ \ j\rightarrow \infty.
\end{equation*}
Consequently, from \eqref{eq3-d}--\eqref{eq-3.7}, we have
\begin{equation*}
\int_{\Omega} (\gamma_0-\underline{\lambda})|y_{1}^0|^2 {\rm d}t{\rm d}x + \int_{\Omega} (\bar{\lambda} -\gamma_0)|y_{2}^0|^2 {\rm d}t{\rm d}x  =0.
\end{equation*}
Therefore, by \eqref{eq3-f} and \eqref{eq-3.3}, it follows
\begin{equation*}\label{eq-3.9}
\int_{\Omega} (\gamma_0-\underline{\lambda})|y_{1}^0|^2 {\rm d}t{\rm d}x =0 \ \ \  {\rm and } \ \ \
\int_{\Omega} (\bar{\lambda} -\gamma_0)|y_{2}^0|^2 {\rm d}t{\rm d}x =0.
\end{equation*}

Now, set
\begin{equation*}
\Omega_i = \{(t,x) \in \Omega \mid y_{i}^0 \neq 0\}, \ \ {\rm for} \ \ i=1,2.
\end{equation*}
Thus, it follows
\begin{equation*}\label{eq-3.10}
 \gamma_0=\underline{\lambda}, \ {\rm on} \ \Omega_1, \ \ \ {\rm and}\ \ \  \gamma_0=\bar{\lambda}, \ {\rm on} \ \Omega_2.
\end{equation*}
Furthermore, with the help of  $\underline{\lambda} < \bar{\lambda}$, we have $\Omega_1 \cap \Omega_2 =\emptyset$.

We now focus the attention on the discussion of the two cases $\Omega_1= \emptyset$ and $\Omega_1\neq \emptyset$, and then  we shall encounter a contradiction and come to the desired conclusion.

If $\Omega_1= \emptyset$, we have $y_{\infty} = y_{2}^0$. Then
\begin{equation*}
0=\int_{\Omega} (\mu - \gamma_0)|y_{2}^0|^2{\rm d}t{\rm d}x \geq \int_{\Omega} (\mu - \beta)|y_{2}^0|^2{\rm d}t{\rm d}x.
\end{equation*}
By \eqref{eq3-h}, we have $y_{\infty} = y_{2}^0 =0$ which contradicts with $\|y_{\infty}\| =1$.

If $\Omega_1\neq \emptyset$, we have
\begin{eqnarray*}
0= \int_{\Omega} (\gamma_0-\lambda)|y_{1}^0|^2 {\rm d}t{\rm d}x &=&\int_{\Omega_1} (\gamma_0-\lambda)|y_{1}^0|^2 {\rm d}t{\rm d}x\\
&\geq& \int_{\Omega_1} (\alpha-\lambda)|y_{1}^0|^2 {\rm d}t{\rm d}x = \int_{\Omega} (\alpha-\lambda)|y_{1}^0|^2 {\rm d}t{\rm d}x.
\end{eqnarray*}
By \eqref{eq3-g}, we have $y_{1}^0 =0$ on
$\Omega_1$, which contradicts with the definition of $\Omega_1$.

Thus, for either $\Omega_1= \emptyset$ or $\Omega_1\neq \emptyset$, we derive a contradiction, so the conclusion is true.
\end{proof}

\begin{proposition}[\cite{Ma79}]\label{pp4.1}
Let $\mathbf{X}$ and $\mathbf{Y}$ be two real normed vector spaces, and let $\Omega \subset \mathbf{X}$ be an open bounded set. Assume that $F=L+G$, $L : \mathcal{D}(L) \subset \mathbf{X}\rightarrow \mathbf{Y}$ is a linear Fredholm operator with zero index and $G: \mathbf{X}\rightarrow \mathbf{Y}$ is a linear and $L$-completely continuous mapping, where $\mathcal{D}(L)$ is the domain of $L$. If $\ker F$ is trivial and $0\in \Omega$, then
\begin{equation*}
D_L(F,\Omega) =\pm 1,
\end{equation*}
where $D_{L} (F, \Omega)$ denotes the degree of $F$ in $\Omega$ relative to $L$.
\end{proposition}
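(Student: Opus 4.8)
The plan is to reduce the coincidence degree $D_L(F,\Omega)$ to a classical Leray--Schauder degree of a compact linear vector field, and then invoke the Fredholm alternative. Since $L$ is Fredholm of index zero, I would first fix continuous projectors $P:\mathbf{X}\to\mathbf{X}$ and $Q:\mathbf{Y}\to\mathbf{Y}$ with $\mathrm{Im}\,P=\ker L$ and $\ker Q=\mathrm{Im}\,L$, so that $\dim\ker L=\dim\mathrm{Im}\,Q<\infty$ and $\mathbf{X}=\ker L\oplus\ker P$, $\mathbf{Y}=\mathrm{Im}\,L\oplus\mathrm{Im}\,Q$. Let $K_P=(L|_{\mathcal{D}(L)\cap\ker P})^{-1}$ be the associated right inverse, and fix a linear isomorphism $J:\mathrm{Im}\,Q\to\ker L$, which exists precisely because the index is zero. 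Recalling the construction of coincidence degree with $N=-G$, the equation $Fx=Lx+Gx=0$ is equivalent to the fixed-point equation $x=Mx$, where
$$M=P-\big(JQ+K_P(I-Q)\big)G,$$
and by definition $D_L(F,\Omega)=\deg_{LS}(I-M,\Omega,0)$.

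Next I would verify that $M$ is a \emph{compact} linear operator on $\mathbf{X}$. The summand $P$ and the summand $JQG$ both have finite-dimensional range and hence are compact, while $K_P(I-Q)G$ is compact by exactly the hypothesis that $G$ is $L$-completely continuous. A finite sum of compact linear maps is compact, so $M$ is compact and $I-M$ is a compact linear perturbation of the identity, to which the Leray--Schauder degree applies.

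Then I would translate the kernel hypothesis: since $Fx=0$ if and only if $(I-M)x=0$, the assumption that $\ker F$ is trivial is exactly the injectivity of $I-M$. By the Riesz--Schauder theory (Fredholm alternative for compact operators), an injective map of the form $I-M$ with $M$ compact is automatically a linear homeomorphism of $\mathbf{X}$. Finally, because $0\in\Omega$ is then the unique solution of $(I-M)x=0$ and lies in the bounded open set $\Omega$, the classical Leray--Schauder formula for the degree of a linear isomorphism gives $\deg_{LS}(I-M,\Omega,0)=(-1)^{\beta}$, where $\beta$ is the sum of the algebraic multiplicities of the eigenvalues of $M$ in $(1,+\infty)$; in particular $D_L(F,\Omega)=(-1)^\beta=\pm1$.

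The part that requires the most care is the reduction step itself: establishing that $K_P(I-Q)G$ is genuinely compact and, more importantly, that the value $\deg_{LS}(I-M,\Omega,0)$ is independent of the auxiliary choices $(P,Q,J)$. These are exactly the facts packaged by Mawhin's coincidence degree framework, so I would lean on that machinery and treat the only genuinely analytic inputs as the Fredholm alternative and the Leray--Schauder product theorem. Note that the argument never pins the sign $(-1)^\beta$ down to $+1$ or $-1$, which is precisely why the conclusion must be stated as $D_L(F,\Omega)=\pm1$ rather than a definite value.
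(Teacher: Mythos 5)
The paper does not prove Proposition \ref{pp4.1} at all: it is quoted as a known result from Mawhin's monograph \cite{Ma79}, so there is no internal proof to compare against. Your reconstruction is precisely Mawhin's own argument from that reference --- the reduction $D_L(F,\Omega)=\deg_{\mathrm{LS}}(I-M,\Omega,0)$ with $M=P-\bigl(JQ+K_P(I-Q)\bigr)G$, compactness of $M$ (finite rank of $P$ and $JQG$ plus $L$-complete continuity of $G$), the Fredholm alternative turning injectivity of $I-M$ into invertibility, and the Leray--Schauder formula $\deg_{\mathrm{LS}}(I-M,\Omega,0)=(-1)^{\beta}$ --- and it is correct, including the observation that the sign is only determined up to the homotopy class of the orientation isomorphism $J$, which is exactly why the conclusion reads $\pm 1$.
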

\begin{lemma}\label{lem:3.2}
Let $\gamma \in \mathcal{M}_e \cap L^\infty(\Omega)$ satisfy \eqref{eq3-a}.  Then, for every open ball $B_r\subset \mathcal{M}_o$ with center $0$ and radius $r>0$,
\begin{equation*}
D_{L_o} (L_o-\gamma I, B_r) =\pm 1,
\end{equation*}
where $I$ is the identity mapping.
\end{lemma}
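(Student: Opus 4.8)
The plan is to apply Proposition \ref{pp4.1} on the space $\mathbf{X}=\mathbf{Y}=\mathcal{M}_o$, taking $L=L_o$ and $G=-\gamma I$ (multiplication by $-\gamma$), so that $F=L_o-\gamma I$ and $\Omega=B_r$. The four hypotheses to verify are then: that $L_o$ is a linear Fredholm operator of zero index, that $G$ is linear and $L_o$-completely continuous, that $\ker F$ is trivial, and that $0\in B_r$.

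For the Fredholm property, I would invoke the structure of $L_o$ established at the end of Sect.~\ref{sec:3}: we have $\dim(\ker L_o)<\infty$, and $\mathcal{M}_o=\ker L_o\oplus\mathcal{R}(L_o)$ with $\mathcal{R}(L_o)$ closed and $L_o^{-1}$ compact on $\mathcal{R}(L_o)$. Since $L_o$ is self-adjoint, the orthogonal complement of its range equals its kernel, so $\operatorname{codim}\mathcal{R}(L_o)=\dim\ker L_o<\infty$. Hence $L_o$ is Fredholm with index $\dim\ker L_o-\operatorname{codim}\mathcal{R}(L_o)=0$. For the $L_o$-complete continuity of $G$, note that because $\gamma\in\mathcal{M}_e\cap L^\infty(\Omega)$, multiplication by $\gamma$ is a bounded linear operator on $L^2(\Omega)$ which maps $\mathcal{M}_o$ into $\mathcal{M}_o$ (by the remark following Lemma \ref{lem:3.1}). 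Composing this bounded operator with the generalized inverse of $L_o$, which is compact on $\mathcal{R}(L_o)$, yields a compact operator; this is precisely the $L_o$-complete continuity required in Proposition \ref{pp4.1}.

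The substantive step is the triviality of $\ker F$, and this is exactly where Lemma \ref{lem:3.1} is used. Since $\gamma$ satisfies \eqref{eq3-a}, that lemma furnishes a constant $\delta>0$ such that $\|L_o y-\gamma y\|\geq\delta\|y\|$ for all $y\in\mathcal{D}(L_o)\cap\mathcal{M}_o$. If $y\in\ker F$, i.e. $(L_o-\gamma I)y=0$, then $0=\|L_o y-\gamma y\|\geq\delta\|y\|$, which forces $y=0$; hence $\ker F=\{0\}$ is trivial. Finally, $B_r$ is an open bounded ball centered at $0$, so $0\in B_r$. All hypotheses of Proposition \ref{pp4.1} are met, and we conclude $D_{L_o}(L_o-\gamma I,B_r)=\pm 1$; since this argument is independent of $r$, the identity holds for every $r>0$.

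The only genuine obstacle here is the a priori bound guaranteeing $\ker F=\{0\}$, but that has already been secured in Lemma \ref{lem:3.1}, so the present proof is largely an assembly step. The two points that require a little care are checking that multiplication by $\gamma$ indeed preserves the decomposition $\mathcal{M}_o$ (ensuring $F$ acts within the right space) and confirming that the notion of $L$-complete continuity used in \cite{Ma79} matches the compactness of $L_o^{-1}G$ on $\mathcal{R}(L_o)$, both of which follow from the spectral structure of $L_o$ and the boundedness of $\gamma$.
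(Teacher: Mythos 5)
Your proposal is correct and follows essentially the same route as the paper's proof: invoke the structural facts $\dim(\ker L_o)<\infty$, $\mathcal{M}_o=\ker L_o\oplus\mathcal{R}(L_o)$ and compactness of $L_o^{-1}$ on $\mathcal{R}(L_o)$ to get the zero-index Fredholm property and the $L_o$-complete continuity of multiplication by $\gamma$, then use the a priori bound of Lemma \ref{lem:3.1} to conclude that $\ker(L_o-\gamma I)$ is trivial, and apply Proposition \ref{pp4.1}. Your write-up merely makes explicit some details (the self-adjointness argument for the index and the composition with the generalized inverse) that the paper leaves implicit.
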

\begin{proof}
 Noting that $\dim(\ker L_o)<\infty$, $\mathcal{M}_o = \ker L_o\oplus \mathcal{R}(L_o)$ and $L_o^{-1}$ is compact on $\mathcal{R}(L_o)$, we have that $L_o$ is a Fredholm operator with zero index and $\gamma I$ is $L_o$-completely continuous on $\mathcal{M}_o$.


By Lemma \ref{lem:3.1}, the equation
\begin{equation*}
L_oy - \gamma y =0
\end{equation*}
has only trivial solution in $\mathcal{D}(L_o)\cap\mathcal{M}_o$, which implies $\ker \big(L_o - \gamma I\big)$ is trivial. Thus, for every open ball $B_r\subset \mathcal{M}_o$ with center $0$ and radius $r$,
by Proposition \ref{pp4.1}, we have
\begin{equation*}
D_{L_o} (L_o-\gamma I, B_r) = \pm 1.
\end{equation*}
The proof is completed.
\end{proof}

\section{The main results}
\label{sec:4}

To prove the main results, we introduce a continuation theorem of the Leray-Schauder type.
\begin{lemma}[\cite{Ma79}] \label{th4.1}
Let $\mathbf{X}$ and $\mathbf{Y}$ be two real normed vector spaces. Assume that $L : \mathcal{D}(L) \subset \mathbf{X}\rightarrow \mathbf{Y}$ is a linear Fredholm operator with zero index and $N : \bar{B}_r \rightarrow \mathbf{Y}$ is a  $L$-compact operator, where $\mathcal{D}(L)$ is the domain of $L$ and $B_r \subset \mathbf{X}$ is an open ball with center $0$ and radius $r$. If there exist a constant $r>0$ and a $L$-compact operator $A : \mathbf{X} \rightarrow \mathbf{Y}$ such that

 {\rm (i)} for every $(y,s) \in (\mathcal{D}(L) \cap \partial B_r) \times (0,1)$;
\begin{equation*}
 Ly - (1-s)Ay -s Ny\neq 0,
\end{equation*}

{\rm (ii)} $0\not\in (L-A)(\mathcal{D}(L) \cap \partial B_r) $;

{\rm (iii)} $D_L(L-A, B_r) \neq 0$.\\
Then, the equation
\begin{equation*}
Ly -Ny =0
\end{equation*}
possesses at least one solution in $\mathcal{D}(L) \cap  \bar{B}_r$.
\end{lemma}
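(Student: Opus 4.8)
The plan is to read this statement as a direct consequence of the two fundamental properties of the coincidence degree $D_L$ developed in \cite{Ma79}: its homotopy invariance and its existence (solvability) property. First I would introduce the affine homotopy
$$N_s := (1-s)A + sN, \qquad s\in[0,1],$$
which interpolates between $N_0 = A$ and $N_1 = N$. Since $A$ and $N$ are both $L$-compact and $N_s$ depends affinely, hence continuously, on the parameter $s$, the family $\{L - N_s\}_{s\in[0,1]}$ is an admissible $L$-compact homotopy, so that $D_L(L-N_s, B_r)$ is defined and invariant along it.

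The key step is to verify that this homotopy has no coincidence points on the boundary $\partial B_r$ for any $s\in[0,1]$, that is,
$$Ly - N_s y = Ly - (1-s)Ay - sNy \neq 0, \qquad \forall\, (y,s) \in (\mathcal{D}(L)\cap\partial B_r)\times[0,1].$$
I would dispose of the three parameter regimes separately. For the interior parameters $s\in(0,1)$ this is precisely hypothesis (i). At the endpoint $s=0$ one has $L - N_0 = L - A$, and the absence of boundary zeros is exactly hypothesis (ii). The endpoint $s=1$ is the only one not covered by (i)--(ii): there $L - N_1 = L - N$, and if $Ly - Ny = 0$ held for some $y\in\mathcal{D}(L)\cap\partial B_r$, then such a $y\in\mathcal{D}(L)\cap\bar{B}_r$ would already be the desired solution and the proof would terminate. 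Hence we may assume, without loss of generality, that no boundary coincidence occurs at $s=1$ either, so the homotopy is admissible on all of $[0,1]$.

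With admissibility secured, the homotopy invariance of the coincidence degree yields
$$D_L(L-N, B_r) = D_L(L - N_1, B_r) = D_L(L - N_0, B_r) = D_L(L - A, B_r),$$
and hypothesis (iii) gives $D_L(L-A, B_r)\neq 0$, whence $D_L(L - N, B_r)\neq 0$. Finally, invoking the existence property of the coincidence degree --- a nonzero degree forces the operator equation to be solvable inside the domain --- I conclude that $Ly - Ny = 0$ admits a solution $y\in\mathcal{D}(L)\cap B_r \subset \mathcal{D}(L)\cap\bar{B}_r$.

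I expect the main obstacle to be bookkeeping rather than conceptual: one must check that the affine combination $N_s$ inherits $L$-compactness \emph{uniformly} in $s$ (so that the associated compact part is compact and $QN_s$ is bounded and continuous on $\bar{B}_r\times[0,1]$), since this uniform $L$-compactness is exactly what legitimizes the single application of homotopy invariance. The endpoint $s=1$ likewise deserves care, as the dichotomy ``either the solution already lies on the boundary or the homotopy stays admissible'' is what reduces this continuation statement to the clean invariance-plus-existence argument.
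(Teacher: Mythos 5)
The paper states this lemma without proof, simply citing Mawhin's monograph \cite{Ma79}, so there is no internal proof to compare against; your argument is exactly the standard proof of Mawhin's continuation theorem: the affine $L$-compact homotopy $N_s=(1-s)A+sN$ on $\bar{B}_r$, boundary admissibility for $s\in(0,1)$ and $s=0$ from hypotheses (i) and (ii), the dichotomy at $s=1$ (either a solution already lies on $\partial B_r$, or the homotopy is admissible there as well), homotopy invariance giving $D_L(L-N,B_r)=D_L(L-A,B_r)\neq 0$ by (iii), and the existence property of the coincidence degree. Your proof is correct, and your closing remark about uniform $L$-compactness of the family $N_s$ is indeed the only bookkeeping point that must be checked, which is routine since $A$ and $N$ are individually $L$-compact and the dependence on $s$ is affine.
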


Let $F : L^2(\Omega) \rightarrow L^2(\Omega)$ be the  Nemytskii operator induced by $\hat{f}$, i.e.,
$F(y)(t,x)=\hat{f}(t,x,y)$.

To deal with the problem \eqref{Eqn1-1}--\eqref{Eqn1-3} on the subspace $\mathcal{M}_o$, it is necessary to require  $F(\mathcal{M}_o) \subset \mathcal{M}_o$. By a direct verification, we have $F(\mathcal{M}_o) \subset \mathcal{M}_o$ if and only if $\hat{f} =\hat{f}_1 + \hat{f}_2$, where $\hat{f}_1$ and $\hat{f}_2$ satisfy
\begin{equation}\label{eq3.3}
\left\{
\begin{array}{ll}
\hat{f}_1(t,x,y)=\hat{f}_1(t+T/2,x,y),\\
\hat{f}_1(t,x,y)=-\hat{f}_1(t,x,-y),
\end{array}
\right.
 \ {\rm and} \
\left\{
\begin{array}{ll}
\hat{f}_2(t,x,y)=-\hat{f}_2(t+T/2,x,y),\\
\hat{f}_2(t,x,y)=\hat{f}_2(t,x,-y).
\end{array}
\right.
\end{equation}
Since $\hat{f}_1$ is odd with respect to $y$, it is obvious that $y^{-1}\hat{f}_1(t,x,y)$  has same asymptotic behaviour for $y\rightarrow +\infty$ and $y\rightarrow -\infty$.

\begin{theorem}\label{th3.1}
  Assume that (H1)--(H2) hold,  $p$ is an even number, $\underline{\lambda}$, $\bar{\lambda} \in \Lambda(L_o)$ are two consecutive eigenvalues, and $\alpha, \beta \in \mathcal{M}_e \cap L^\infty(\Omega)$ satisfy \eqref{eq3-f}--\eqref{eq3-h}. If $\hat{f} =\hat{f}_1 + \hat{f}_2$, $\hat{f}_1$ and $\hat{f}_2$ satisfy the symmetry conditions \eqref{eq3.3}, and satisfy
\begin{equation}
\label{eq3.4}
\lim_{|y|\rightarrow +\infty} \frac{\hat{f_2}(t,x,y)}{y} = 0,
\end{equation}
uniformly  for a.e. $(t,x)\in\Omega$, and
\begin{equation}
\label{eq3.5}
\alpha(t,x)\leq  \liminf_{|y|\rightarrow +\infty} \frac{\hat{f}_1(t,x,y)}{y} \leq \limsup_{|y|\rightarrow +\infty} \frac{\hat{f}_1(t,x,y)}{y}\leq \beta (t,x),
\end{equation}
uniformly  for a.e. $(t,x)\in\Omega$.  Then problem \eqref{Eqn1-1}--\eqref{Eqn1-3} has at least one periodic solution  lying in $\mathcal{M}_o$.
\end{theorem}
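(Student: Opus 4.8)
The plan is to recast the problem as the operator equation $L_o y = F(y)$ on $\mathcal{M}_o$ and solve it by the Leray--Schauder continuation theorem (Lemma~\ref{th4.1}). First I would note that, since the symmetry conditions \eqref{eq3.3} guarantee $F(\mathcal{M}_o)\subset\mathcal{M}_o$ and $L$ is completely reduced by $\mathcal{M}_o$ and $\mathcal{M}_e$, any $y\in\mathcal{M}_o$ solving $L_o y=F(y)$ is automatically a weak solution of \eqref{Eqn1-1}--\eqref{Eqn1-3}: the $\mathcal{M}_e$-component of the equation is trivially $0=0$. The growth implied by \eqref{eq3.4}--\eqref{eq3.5} together with (H2) gives a linear bound $|\hat f(t,x,y)|\le C|y|+h(t,x)$ with $h\in L^2(\Omega)$, so $F$ is a continuous Nemytskii operator of linear growth and hence $L_o$-compact (using that $L_o^{-1}$ is compact on $\mathcal{R}(L_o)$ and $\dim\ker L_o<\infty$). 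I would take $\mathbf X=\mathbf Y=\mathcal{M}_o$, $L=L_o$, $N=F$, and the homotopy anchor $A=\gamma I$ with $\gamma\in\mathcal{M}_e\cap L^\infty(\Omega)$ any fixed function satisfying \eqref{eq3-a} (e.g.\ $\gamma=\alpha$), where $\varepsilon,\delta$ are the constants furnished by Lemma~\ref{lem:3.1}.

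Conditions (ii) and (iii) of Lemma~\ref{th4.1} come for free: Lemma~\ref{lem:3.1} gives $\|L_o y-\gamma y\|\ge\delta\|y\|$, so $L_o-\gamma I$ is injective and $0\notin(L_o-\gamma I)(\mathcal{D}(L_o)\cap\partial B_r)$, while Lemma~\ref{lem:3.2} yields $D_{L_o}(L_o-\gamma I,B_r)=\pm1\ne0$. The real content is the a priori bound (i): I would show there is $r_0>0$ such that every solution of $L_o y=(1-s)\gamma y+sF(y)$ with $s\in(0,1)$ satisfies $\|y\|\le r_0$; then (i) holds on $\partial B_r$ for any $r>r_0$. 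Arguing by contradiction, suppose there are $s_j\in(0,1)$ and $y_j\in\mathcal{D}(L_o)\cap\mathcal{M}_o$ with $\|y_j\|\to\infty$ solving the homotopy equation. Normalizing $z_j=y_j/\|y_j\|$ gives
\[
L_o z_j=(1-s_j)\gamma z_j+s_j\,\frac{\hat f_1(t,x,y_j)}{\|y_j\|}+s_j\,\frac{\hat f_2(t,x,y_j)}{\|y_j\|}.
\]
Splitting $\Omega$ into $\{|y_j|\le R\}$ and $\{|y_j|>R\}$ and using (H2) on the first set and \eqref{eq3.4} on the second, the last term tends to $0$ in $L^2(\Omega)$; by the linear growth the middle term is bounded in $L^2(\Omega)$, so the right-hand side is bounded and $L_o z_j$ is bounded. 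Compactness of $L_o^{-1}$ on $\mathcal{R}(L_o)$ together with $\dim\ker L_o<\infty$ then lets me extract (along a subsequence) $z_j\to z_\infty$ strongly in $L^2(\Omega)$ with $\|z_\infty\|=1$, and $s_j\to s_\infty\in[0,1]$.

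The heart of the matter is identifying the weak limit of the $\hat f_1$-term. Writing $\gamma_j=\hat f_1(t,x,y_j)/y_j$ on $\{y_j\ne0\}$ (and noting the symmetry \eqref{eq3.3} forces $\gamma_j\in\mathcal{M}_e$), I would show $\gamma_j z_j\rightharpoonup\gamma_0 z_\infty$ weakly for some $\gamma_0\in\mathcal{M}_e\cap L^\infty(\Omega)$ with $\alpha\le\gamma_0\le\beta$: on $\{z_\infty\ne0\}$ one has $|y_j|\to\infty$ a.e., so \eqref{eq3.5} traps $\gamma_j$ in $[\alpha-\varepsilon',\beta+\varepsilon']$ for every $\varepsilon'>0$ eventually and a weak-$*$ limit $\gamma_0$ inherits $\alpha\le\gamma_0\le\beta$, while on $\{z_\infty=0\}$ strong convergence $z_j\to0$ forces $\gamma_j z_j\to0$ in $L^2(\Omega)$. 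Passing to the limit and using that the graph of the closed operator $L_o$ is weakly closed, I obtain $z_\infty\in\mathcal{D}(L_o)$ and $L_o z_\infty=\tilde\gamma z_\infty$ with $\tilde\gamma=(1-s_\infty)\gamma+s_\infty\gamma_0$. Since both $\gamma$ and $\gamma_0$ lie in $[\alpha-\varepsilon,\beta+\varepsilon]$, so does the convex combination $\tilde\gamma$, i.e.\ $\tilde\gamma$ satisfies \eqref{eq3-a}; Lemma~\ref{lem:3.1} then gives $0=\|L_o z_\infty-\tilde\gamma z_\infty\|\ge\delta\|z_\infty\|=\delta>0$, a contradiction. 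Hence (i) holds, and Lemma~\ref{th4.1} produces a solution $y\in\mathcal{D}(L_o)\cap\bar B_r\subset\mathcal{M}_o$.

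I expect the main obstacle to be the limit identification $\gamma_j z_j\rightharpoonup\gamma_0 z_\infty$: the ratio $\hat f_1/y$ need not converge pointwise and may be badly behaved near $y=0$, so the argument must carefully separate the region where $|y_j|\to\infty$ (controlled by the uniform asymptotics \eqref{eq3.5}) from the region where $z_\infty$ vanishes (controlled by strong $L^2$-convergence), and must produce a limiting coefficient $\tilde\gamma$ obeying \eqref{eq3-a} with exactly the $\varepsilon$ supplied by Lemma~\ref{lem:3.1}, so that its lower bound can be invoked to close the contradiction.
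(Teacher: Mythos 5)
Your proposal is correct and shares the paper's overall skeleton --- reduction to $\mathcal{M}_o$ via the symmetry conditions \eqref{eq3.3}, the continuation theorem of Lemma \ref{th4.1} with anchor $A=\alpha I$, and conditions (ii)--(iii) supplied by Lemmas \ref{lem:3.1} and \ref{lem:3.2} --- but you establish the crucial a priori bound (i) by a genuinely different argument. The paper never takes limits: it fixes $R$ so that \eqref{eq3-6} holds with the $\varepsilon$ of Lemma \ref{lem:3.1}, interpolates a coefficient $g(t,x,y)$ between $y^{-1}\hat f(t,x,y)$ for $|y|\ge R$ and $\alpha(t,x)$ near $y=0$, and writes $F_o(y)=G_o(y)y+\Theta_o(y)$ with $\alpha-\varepsilon\le g\le\beta+\varepsilon$ pointwise for \emph{all} $y$ and $\|\Theta_o(y)\|$ bounded uniformly in $y$; then for each solution of the homotopy equation the coefficient $\tilde\gamma=(1-s)\alpha+sg$ satisfies \eqref{eq3-a}, and a single application of Lemma \ref{lem:3.1} gives the explicit bound $\|y\|\le\delta^{-1}\bigl(2\|h_R\|+\pi T\|\alpha\|_{L^\infty(\Omega)}\bigr)$, hence an explicit admissible radius $r$. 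Your blow-up/normalization argument reaches the same contradiction by applying Lemma \ref{lem:3.1} to the limit $z_\infty$, at the cost of subsequence extraction, weak closedness of the graph of $L_o$, and the limit identification you flag as the main obstacle. That obstacle is in fact softer than you fear: since Lemma \ref{lem:3.1} tolerates the full $\varepsilon$-band \eqref{eq3-a}, you never need the sharp bounds $\alpha\le\gamma_0\le\beta$ on $\{z_\infty\ne0\}$, nor any pointwise convergence of $\hat f_1/y$. It suffices to set $\tilde\gamma_j=\hat f_1(t,x,y_j)/y_j$ on $\{|y_j|\ge R\}$ and $\tilde\gamma_j=\alpha$ on $\{|y_j|<R\}$ --- a patching that stays in $\mathcal{M}_e$ because $|y_j|$ is $T/2$-periodic, so both sets are shift-invariant --- whence $\tilde\gamma_j$ lies uniformly in $[\alpha-\varepsilon,\beta+\varepsilon]$, the discarded piece of $\hat f_1(t,x,y_j)/\|y_j\|$ is $O(\|y_j\|^{-1})$ in $L^2$ by (H2), and any weak-$*$ limit $\gamma_0$ inherits \eqref{eq3-a} by convexity and weak-$*$ closedness of the order interval. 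In effect your route rediscovers, in limit form, exactly the truncation the paper performs explicitly. What the paper's version buys is an elementary, quantitative proof with no compactness beyond what Lemma \ref{th4.1} already demands; what yours buys is the classical Mawhin--Ward-style asymptotic-nonresonance argument (compare \cite{MW83}), which is more flexible when no global pointwise decomposition of the nonlinearity is available.
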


\begin{proof}
Since $\hat{f} =\hat{f}_1 + \hat{f}_2$ and $\hat{f}_1$ and $\hat{f}_2$ satisfy \eqref{eq3.3}, $F(\mathcal{M}_o) \subset \mathcal{M}_o$, Thus, we consider problem \eqref{Eqn1-1}--\eqref{Eqn1-3} in the subspace $\mathcal{M}_o$.

By  \eqref{eq3.4} and \eqref{eq3.5}, there is $R>0$ such that
\begin{equation}\label{eq3-6}
\alpha(t,x)-\varepsilon \leq y^{-1}\hat{f}(t,x,y) \leq \beta(t,x) +\varepsilon, \ \ {\rm for} \ |y|\geq R,
\end{equation}
where $\varepsilon$ is presented in Lemma \ref{lem:3.1}.
The estimate \eqref{eq3-6} combining with (H2) gives
\begin{equation*}
\big|\hat{f}(t,x,y)\big|\leq C|y| + h_R(t,x),
\end{equation*}
for all $y\in \mathbb{R}$  and a.e. $(t,x)\in\Omega$, where the constant $C>0$ depends on $\varepsilon, R$. Thus, the operator
$F_o :=F|_{\mathcal{M}_o}$
is continuous and maps a bounded set into a bounded set. Then $y \in \mathcal{D}(L_o)\cap \mathcal{M}_o$ is a weak solution of problem \eqref{Eqn1-1}--\eqref{Eqn1-3} if and only if
\begin{equation}\label{eq3.6}
L_oy - F_o y =0.
\end{equation}

Recalling  that $L_o$ is a Fredholm operator with zero index, $\mathcal{M}_o = \ker L_o\oplus \mathcal{R}(L_o)$ and $L_o^{-1}$ is compact on $\mathcal{R}(L_o)$, it follows that $F_o$ is $L_o$-compact.
Thus, by Lemma \ref{lem:3.2}, for every $r>0$, it follows
\begin{equation*}
D_{L_o} (L_o-\alpha I, B_r) = \pm 1.
\end{equation*}

In addition, by Lemma \ref{lem:3.1}, $0\not\in (L_o-\alpha I)(\mathcal{D}(L_o) \cap \partial B_r)$.
Consequently, by Lemma \ref{th4.1}, equation \eqref{eq3.6} will possess at least one solution if the set of all possible solutions of
\begin{equation}\label{eq3.7}
L_oy - (1-s)\alpha y - s F_o(y) = 0, \ \ s\in (0, 1),
\end{equation}
is a priori bounded independently of $s$. Set
\begin{equation*}
g(t,x,y)= \left\{
\begin{array}{ll}
y^{-1}\hat{f}(t,x,y),  \quad\quad\quad\quad\quad\quad\quad\quad\quad \ \ \ {\rm for} \ |y| \geq R,\\
R^{-1}\hat{f}(t,x,R)\frac{y}{R} + (1- \frac{y}{R}) \alpha (t,x),   \ \ \ \ {\rm for} \ 0\leq y < R,\\
R^{-1}\hat{f}(t,x,-R)\frac{y}{R} + (1+ \frac{y}{R}) \alpha (t,x),   \ \ {\rm for} \ -R< y \leq 0.
\end{array}
\right.
\end{equation*}
Obviously,
\begin{equation*}
\alpha (t,x) -\varepsilon \leq g(t,x,y) \leq \beta (t,x) +\varepsilon,
\end{equation*}
for all $y\in \mathbb{R}$ and a.e. $(t,x)\in\Omega$. Let $\theta (t,x,y) = \hat{f}(t,x,y) - g(t,x,y)y$. Thus, we have
\begin{equation*}
|\theta (t,x,y)| \leq 2 h_R(t,x) + |\alpha (t,x)|,
\end{equation*}
for all $y \in \mathbb{R}$ and a.e. $(t,x)\in \Omega$. Define
\begin{equation*}
(G(y)z)(t,x)= g(t,x,y)z, \ \  \forall y, z\in L^2(\Omega).
\end{equation*}
For every $y \in \mathcal{M}_o$, let $G_o(y)= G(y)|_{\mathcal{M}_o}$. Since $g(t,x,y(t,x))=g(t+T/2,x,y(t +T/2,x))$ for $y\in\mathcal{M}_o$, $G_o(y): \mathcal{M}_o \rightarrow \mathcal{M}_o$. Define $\Theta(y)(t,x)= \theta(t,x,y)$  for $y \in L^2(\Omega)$, and let $\Theta_o= \Theta|_{\mathcal{M}_o}$. Hence
\begin{equation*}
F_o(y)= G_o(y)y +\Theta_o(y), \ \  \forall y \in \mathcal{M}_o.
\end{equation*}
Thus equation \eqref{eq3.7} is equivalent to
\begin{equation*}
L_oy - \Big((1-s)\alpha I +sG_o(y) \Big)(y) = s \Theta_o(y), \ \ s\in (0, 1).
\end{equation*}
Let $\tilde{\gamma} = (1-s)\alpha  +sg(t,x,y)$, we have $\alpha (t,x) -\varepsilon \leq \tilde{\gamma} \leq \beta (t,x) +\varepsilon$. Thus,  with the aid of Lemma \ref{lem:3.1}, it follows
\begin{equation*}
\delta \|y\| \leq \|L_oy - \tilde{\gamma} y\| =\|s \Theta(y)\| \leq 2 \|h_R\| +\pi T\|\alpha\|_{L^{\infty}(\Omega)},
\end{equation*}
i.e., $ \|y\| \leq \frac{1}{\delta}(2 \|h_R\| +\pi T\|\alpha\|_{L^{\infty}(\Omega)})$. The proof is completed.
\end{proof}

Now, we consider that $\hat{f}$ has the properties
\begin{equation}\label{eq3-m}
\hat{f}(t,x,y)=\hat{f}(t+T/2,x,y) \ \ \ \ {\rm and} \ \ \ \ \hat{f}(t,x,-y)=-\hat{f}(t,x,y),
\end{equation}
which implies $\hat{f}_2\equiv0$ and $\hat{f} =\hat{f}_1$ for any $(t,x,y)\in  \Omega\times \mathbb{R}$. Obviously, $F(\mathcal{M}_o) \subset \mathcal{M}_o$ and  $0$ is a trivial solution. The following theorem shows that, under the global nonresonance conditions, problem \eqref{Eqn1-1}--\eqref{Eqn1-3} does not possess other solutions in the invariant subspace $\mathcal{M}_o$.

\begin{theorem}
Assume that (H1)--(H2) hold,  $p$ is an even number, $\underline{\lambda}$, $\bar{\lambda} \in \Lambda(L_o)$ are two consecutive eigenvalues, and $\alpha, \beta \in \mathcal{M}_e \cap L^\infty(\Omega)$ satisfy \eqref{eq3-f}--\eqref{eq3-h}. If $\hat{f}$ satisfies  \eqref{eq3-m} and
\begin{equation}\label{eq3.9}
\alpha(t,x) \leq \frac{\hat{f}(t,x,y) - \hat{f}(t,x,z)}{y-z} \leq \beta(t,x),
\end{equation}
for $y \neq z$ and a.e. $(t,x)\in \Omega$.
Then, problem \eqref{Eqn1-1}--\eqref{Eqn1-3} has only one (trivial) solution in $\mathcal{M}_o$.
\end{theorem}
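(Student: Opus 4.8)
The plan is to read uniqueness straight off the a priori estimate of Lemma \ref{lem:3.1}, by interpreting the difference of two solutions as a solution of a homogeneous linear equation $L_o w = \gamma w$ whose coefficient $\gamma$ is trapped between $\alpha$ and $\beta$. First I would take two weak solutions $y_1, y_2 \in \mathcal{M}_o$. Exactly as in the proof of Theorem \ref{th3.1}, the linear growth of $\hat f$ forced by \eqref{eq3.9} gives $F_o y_i \in L^2(\Omega)$ and places $y_1, y_2$ in $\mathcal{D}(L_o)\cap\mathcal{M}_o$ with $L_o y_i = F_o y_i$. Subtracting the two identities and using the linearity of $L_o$ yields, with $w := y_1 - y_2$,
\[
L_o w = \hat f(t,x,y_1) - \hat f(t,x,y_2).
\]

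Next I would introduce the difference-quotient coefficient
\[
\gamma(t,x) =
\begin{cases}
\dfrac{\hat f(t,x,y_1(t,x)) - \hat f(t,x,y_2(t,x))}{y_1(t,x) - y_2(t,x)}, & y_1(t,x) \neq y_2(t,x),\\[2mm]
\alpha(t,x), & y_1(t,x) = y_2(t,x),
\end{cases}
\]
so that $L_o w - \gamma w = 0$ for a.e. $(t,x)\in\Omega$ (on the coincidence set both sides vanish, so the chosen value is immaterial to this identity). The global nonresonance condition \eqref{eq3.9} gives $\alpha(t,x) \leq \gamma(t,x) \leq \beta(t,x)$ a.e., hence $\gamma \in L^\infty(\Omega)$ and in particular $\gamma$ satisfies \eqref{eq3-a}.

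The key verification, and the only step that genuinely exploits the symmetry \eqref{eq3-m}, is that $\gamma \in \mathcal{M}_e$, i.e. $\gamma(t+T/2,x)=\gamma(t,x)$. Here I would combine $y_i(t+T/2,x) = -y_i(t,x)$ (from $y_i \in \mathcal{M}_o$) with the two relations in \eqref{eq3-m}, namely $\hat f(t+T/2,x,\cdot)=\hat f(t,x,\cdot)$ and $\hat f(t,x,-\cdot)=-\hat f(t,x,\cdot)$. Substituting, the numerator of $\gamma$ evaluated at $(t+T/2,x)$ becomes $-\bigl(\hat f(t,x,y_1)-\hat f(t,x,y_2)\bigr)$ and the denominator becomes $-(y_1-y_2)$, so the quotient is unchanged; moreover the coincidence set $\{y_1=y_2\}$ is invariant under $t\mapsto t+T/2$ and there $\gamma=\alpha\in\mathcal{M}_e$. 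Hence $\gamma\in\mathcal{M}_e\cap L^\infty(\Omega)$, which is precisely what makes $\gamma w\in\mathcal{M}_o$ (cf.\ the Remark following Lemma \ref{lem:3.1}) and allows the lemma to apply.

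Finally, Lemma \ref{lem:3.1} gives $\delta\|w\| \leq \|L_o w - \gamma w\| = 0$, so $w=0$ and $y_1=y_2$. Since \eqref{eq3-m} forces $\hat f(t,x,0)=0$, the function $y\equiv 0$ is a solution, and by the uniqueness just established it is the only solution in $\mathcal{M}_o$. I expect the sole delicate point to be the symmetry bookkeeping that certifies $\gamma\in\mathcal{M}_e$; everything else is an immediate specialization of the estimate built for Theorem \ref{th3.1}.
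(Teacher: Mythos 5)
Your proof is correct and follows essentially the same route as the paper's: both reduce uniqueness to the rigidity estimate of Lemma \ref{lem:3.1} applied with a difference-quotient coefficient that \eqref{eq3.9} traps between $\alpha$ and $\beta$ and that the symmetries \eqref{eq3-m} place in $\mathcal{M}_e$. The only (harmless) difference is that the paper compares a putative solution $y_0$ directly with the trivial solution, taking $\xi = y_0^{-1}\hat f(t,x,y_0)$, i.e.\ the special case $y_2 \equiv 0$ of your argument, whereas you compare two arbitrary solutions and, more carefully than the paper, define $\gamma$ explicitly on the coincidence set $\{y_1=y_2\}$.
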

\begin{proof}
Since $\hat{f}$ satisfies  \eqref{eq3-m}, $\hat{f}_2\equiv0$ and \eqref{eq3.4} is verified.
By \eqref{eq3.9}, the estimate \eqref{eq3.5} holds. Thus,  by Theorem \ref{th3.1},  there exists at least one periodic solution to problem \eqref{Eqn1-1}--\eqref{Eqn1-3} in $\mathcal{M}_o$.

Notice that $0$ is a trivial solution  to problem \eqref{Eqn1-1}--\eqref{Eqn1-3}.  Now we show problem \eqref{Eqn1-1}--\eqref{Eqn1-3} does not possess other solutions in $\mathcal{M}_0$ by contradiction. Suppose  $y_0 \neq 0$ is a solution to problem \eqref{Eqn1-1}--\eqref{Eqn1-3},  then $y_0$ satisfies
\begin{equation}\label{eq3.10}
L_oy_0 - \hat{f}(t,x,y_0)=0.
\end{equation}
Set $\xi(t,x,y_0)=y_0^{-1}\hat{f}(t,x,y_0) \in \mathcal{M}_e$, then by \eqref{eq3.9}, we have
\begin{equation*}
\alpha (t,x)- \varepsilon \leq \xi(t,x,y_0) \leq \beta(t,x)+\varepsilon,
\end{equation*}
for a.e. $(t,x) \in \Omega$, where $\varepsilon$ is presented in Lemma \ref{lem:3.1}. Therefore, by Lemma \ref{lem:3.1} and equation \eqref{eq3.10}, we have
\begin{equation*}
\delta\|y_0\| \leq \|L_oy_0 - \xi(t,x,y_0)y_0\|=0.
\end{equation*}
So we have $y_0=0$, and it yields a contradiction. This is the result we desire.
\end{proof}

\vskip 5mm

{\bf Acknowledgements} This work was partially supported by National Natural Science Foundation of China (Grant Nos. 12071065, 11671071 and 11871140).





\bibliographystyle{elsarticle-num}
\bibliography{<your-bib-database>}



\section*{References}

\end{document}